\date{}
\begin{document}

\centerline{\bf }

\centerline{\bf }

\centerline{\bf }

\centerline{}

\centerline{}

\centerline{\Large{\bf On the fractional mixed fractional Brownian motion }}

\centerline{}

\centerline{\Large{\bf  Time Changed by Inverse $\alpha$-Stable Subordinator}}

\centerline{}

\centerline{\bf { Ezzedine Mliki}}

\centerline{}

\centerline{Department of Mathematics}

\centerline{ College of Science, Imam Abdulrahman Bin Faisal University}

\centerline{Basic and Applied Scientific Research Center}

\centerline{ P.O. Box 1982, Dammam, 31441, Saudi Arabia}

\centerline{ermliki@iau.edu.sa}

\newtheorem{Theorem}{\quad Theorem}[section]

\newtheorem{Definition}[Theorem]{\quad Definition}

\newtheorem{Corollary}[Theorem]{\quad Corollary}

\newtheorem{Lemma}[Theorem]{\quad Lemma}

\newtheorem{Example}[Theorem]{\quad Example}

\newtheorem{Notation}[Theorem]{\quad Notation}

\newtheorem{Remark}[Theorem]{\quad Remark}

{\footnotesize }

\centerline{}

\begin{abstract}
A time-changed fractional mixed fractional Brownian motion by inverse $\alpha$-stable subordinator with index $\alpha\in(0, 1)$   is an iterated process $L^{H_1H_2 }_{T^{\alpha}}(a,b)$ constructed as the superposition of fractional mixed fractional Brownian motion $N^{H_1H_2 }(a,b)$ and  an independent inverse $\alpha$-stable subordinator $T^{\alpha}$.  In this paper we prove that the process  $L^{H_1H_2 }_{T^{\alpha}}(a,b)$  is of long range dependence property under a smooth condition on the Hirsh index $H_1$ and $ H_2$. We deduce that the fractional mixed fractional Brownian motion  has long range dependence for  every $H_1 < H_2.$
\end{abstract}

{\bf Mathematics Subject Classification:} {60G20; 60G18; 60G15; 60G10 } \\

{\bf Keywords:} {Fractional  Mixed Fractional Brownian Motion; Long-range Dependence;  Inverse $\alpha$-stable subordinator}

\section{Introduction}
  A mixed fractional Brownian motion (mfBm for short) of parameters $a,  b $ and $  H $ is the process  $M^H(a,b)=\{M^{H}_{t}(a,b), \; t\geq 0\}$, defined on the probability space  $(\Omega, \mathcal{F}, P)$ by
	\begin{eqnarray*}
		M_t^H(a,b)=aB_t+bB_t^H, \quad t\geq0
	\end{eqnarray*}
	where  $B=\left\lbrace B_t, t\geq0\right\rbrace $ is a Brownian motion,   $B^H=\left\lbrace B_t^H, t\geq0\right\rbrace $ is an independent fractional Brownian motion of Hurst exponent $H\in(0,1)$ and  $a$, $b$ two real constants such that $(a, b)\neq (0, 0)$. The mfBm Was introduced by Cheridito \cite{Cher}, 
 with stationary increments exhibit a long-range dependence for $H>\frac{1}{2}.$ The mixed fractional Brownian motion has been discussed in \cite{Cher} to present a stochastic model of the discounted stock price in some arbitrage-free and complete financial markets. This model is the process
\begin{eqnarray*}
X_{t}=X_{0}\exp\{\mu t+\sigma (aB_t+bB_t^H)\},
\end{eqnarray*}
where $\mu$ is the rate of the return and $\sigma$ is the volatility. We refer also to \cite{ElNo, LIN, Thale} for further information and applications on the mfBm.

The  time-changed mixed fractional Brownian motion by  inverse $\alpha$-stable subordinator with index $\alpha\in(0, 1)$ is defined as  below
\begin{eqnarray*}
L^{H}_{T^{\alpha}}(a,b)=\{M^{H}_{T^{\alpha}_{t}}(a,b), \; t\geq 0\},
\end{eqnarray*}
where the parent process $N^{H}(a,b)$ is a mfBm with parameters $a, b,$ $H\in (0, \, 1)$ and  $T^{\alpha}=\{ T^{\alpha}_{t},\, t\geq 0\}$  is an  inverse $\alpha$-stable subordinator assumed to be independent of both Brownian and  fractional  Brownian motion. If $H=\frac{1}{2}$, the process $L^{\frac{1}{2}}_{T^{\alpha}}(0,1)$ is called subordinated Brownian motion, it was investigated in \cite{ GSPE, Mag, MMEM, Nan}. When $a=0,$ $b=1$ then $L^{H}_{T^{\alpha}}(0,1)$ it is the process  considered in   \cite{KGW, KWPS} called subordinated fractional Brownian motion.

Time-changed process is constructed by taking superposition of tow independent stochastic systems. The evolution of time in external process is replaced by a non-decreasing stochastic process, called subordinator.  The resulting time-changed process very often retain important properties of the external process, however certain characteristics might change. This idea of subordination was introduced by Bochner \cite{Bochner} and  was  explored in many papers (see  \cite{HmMl, MeHmMl, KGW, MeMl}).

The  time-changed mixed fractional Brownian motion has been discussed in \cite{GZH} to present a stochastic Black-Scholes model, whose price of the underlying stock is the process
\begin{eqnarray*}
S_{t}=S_{0}\exp\{\mu T^{\alpha}_{t}+\sigma( aB_{T^{\alpha}_{t}}+bB^H_{T^{\alpha}_{t}})\},
\end{eqnarray*}
where $\mu$ is the rate of the return, $\sigma$ is the volatility and $T^{\alpha}$ is the $\alpha$-inverse stable subordinator. Also
the time-changed processes have found many interesting applications, for example in finance  \cite{GZH, HJY, Omer, FSH, ZHH}.

 C. Elnouty  \cite{ElNo} propose a generalisation of the mfBm called fractional mixed fractional Brownian motion (fmfBm) of parameters $a,  b $ and Hirsh index $ H=( H_1, H_2).$ A fmfBm is a process  $N^H(a,b)=\{N^{H}_{t}(a,b), \; t\geq 0\}$, defined on the probability space  $(\Omega, \mathcal{F}, P)$ by
	\begin{eqnarray*}
		N_t^{H_1H_2}(a,b)=aB^{H_1}_t+bB^{H_2}_t,\quad t\geq 0,
	\end{eqnarray*}
	 where $B=\left\lbrace B_t, t\geq0\right\rbrace $ is a Brownian motion and  $B{^{H_{i}}}=\left\lbrace B_t^{H_{i}}, t\geq0\right\rbrace $ are independent fractional Brownian motion of Hurst exponent $H_{i}\in(0,1)$ for $i=1,2.$ Also the fmfBm was study by Miao, Y et al. \cite{Mia}.

The time-changed fractional mixed fractional Brownian motion is defined as 	 \begin{eqnarray*}L^{H}_{\beta}(a,b)=\{L^{H_1H_2}_{\beta_{t}}(a,b), \; t\geq 0\} =\{N^{H}_{\beta_{t}}(a,b), \; t\geq 0\},	 \end{eqnarray*} where the parent process $N^{H}(a,b)$ is a fmfBm with parameters $a, b,$ $H\in (0, \, 1)$ and  the subordinator $\beta=\{ \beta_{t},\, t\geq 0\}$ is assumed to be independent of both the  Brownian motion and the fractional Brownian motion. If $H_1=\frac{1}{2} $ and $H_2= 0$, the process $L^{H_1H_2}_{\beta}(0,1)$ is called subordinated Brownian motion, it was investigated in \cite{Mag, Nan}. Also, the process $L^{H_1H_2}_{\beta}(0,1)$ is called subordinated fractional Brownian motion  it was investigated in  \cite{KGW, KWPS}.

Time-changed process is constructed by taking superposition of tow independent stochastic systems. The evolution of time in external process is replaced by a non-decreasing stochastic process, called subordinator.  The resulting time-changed process very often retain important properties of the external process, however certain characteristics might change. This idea of subordination was introduced by Bochner \cite{Bochner} and  was  explored in many papers (e.g. \cite{Al, AlE,  HmMl, MeHmMl,  KGW, MejMl, MeMl, Omer}).

The time-changed processes have found many interesting applications, for example in finance  \cite{ GZH, HJY, Omer, FSH}, in statistical inference \cite{AlexYES} and in physics  \cite{GSPE}.

Our goal in this parer is to study the main properties of  the time-changed fractional mixed fractional Brownian motion by inverse $\alpha$-stable subordinator paying attention to the long range dependence property.

\section{Main results and proofs}
We begin by defining the  inverse $\alpha$-stable subordinator.

\begin{Definition} The  inverse $\alpha$-stable subordinator $T^{\alpha}=\{T^{\alpha}_{t},\; t \geq 0\}$ is defined in the following way
	\begin{eqnarray}
	T^{\alpha}_{t}=inf\{r>0, \; \eta^{\alpha}_{r}\geq t\},
	\end{eqnarray}	
where $\eta^{\alpha}=\{ \eta^{\alpha}_{r},\; r\geq 0\}$ is the $\alpha$-stable subordinator  \cite{KSAT, MSOK} with Laplace transform
	\begin{eqnarray*}
	E(e^{-u\eta^{\alpha}_{r}})=e^{{-ru^{\alpha}}},\quad \alpha \in(0, 1).
	\end{eqnarray*}	
 The  inverse $\alpha$-stable subordinator  is a non-decreasing L\'{e}vy process, starting from zero, has a stationary and independent increments with  $\alpha$-self similar. Specially, when $\alpha \uparrow 1,$ $T^{\alpha}_{t}$ reduces to the physical time $t.$
\end{Definition}

 Let $T^{\alpha}$ be an  inverse $\alpha$-stable subordinator with index $\alpha\in(0, 1)$. From \cite{MMag, MAGD}, we know that \begin{eqnarray*}E(T^{\alpha}_{t})=\frac{t^{\alpha}}{\Gamma(\alpha+1)}\quad and \quad E((T^{\alpha}_{t})^{n})=\frac{t^{n\alpha}n!}{\Gamma(n\alpha+1)}.\end{eqnarray*}

\begin{Lemma}\label{lma2} Let $T^{\alpha}$ be an  inverse $\alpha$-stable subordinator with index $\alpha\in(0, 1)$ and $B^{H}$ be a fBm.
Then, by $\alpha$-self-similar and non-decreasing sample path of $T^{\alpha}_{t},$ we have
\begin{eqnarray*}
E(B_{T^{\alpha}_{t}})^{2}=\frac{t^{\alpha}}{\Gamma(\alpha+1)}\quad and \quad E(B^{H}_{T^{\alpha}_{t}})^{2}=\left(\frac{t^{\alpha}}{\Gamma(\alpha+1)}\right)^{2H}.
\end{eqnarray*}
\end{Lemma}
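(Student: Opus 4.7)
The plan is uniform across the two identities: I would condition on $T^\alpha_t$ and use the independence of $T^\alpha$ from both $B$ and $B^H$. Since $B_s$ has variance $s$, the tower property gives $E(B_{T^\alpha_t})^2 = E\bigl[E(B_s^2)|_{s=T^\alpha_t}\bigr] = E(T^\alpha_t)$, and the first-moment formula $E(T^\alpha_t) = t^\alpha/\Gamma(\alpha+1)$ recalled immediately before the lemma finishes the Brownian part.

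For the fractional piece, the same conditioning together with $E[(B^H_s)^2] = s^{2H}$ reduces the calculation to evaluating the moment $E[(T^\alpha_t)^{2H}]$ of the subordinator. Here I would invoke the $\alpha$-self-similarity of the inverse stable subordinator, $T^\alpha_t \stackrel{d}{=} t^\alpha T^\alpha_1$, to pull out the temporal dependence as $t^{2H\alpha}$, leaving only the constant $E[(T^\alpha_1)^{2H}]$ to identify. Matching this constant with $\Gamma(\alpha+1)^{-2H}$ then produces the claimed $\bigl(t^\alpha/\Gamma(\alpha+1)\bigr)^{2H}$. The non-decreasing nature of the sample paths of $T^\alpha$ that the lemma cites is what legitimises the pathwise evaluation $(B^H_s)^2|_{s=T^\alpha_t}$ under the expectation sign without any measurability subtlety.

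The genuine obstacle is precisely this last step: the moment formula recalled in the excerpt is stated only for integer exponents $n$, so evaluating $E[(T^\alpha_1)^{2H}]$ in closed form for a real exponent $2H \in (0,2)$ requires either the Mittag--Leffler representation of the density of $T^\alpha_1$ or an equivalent self-similar/scaling argument. Once that constant is pinned down, the remaining verifications are essentially bookkeeping using independence and the tower property, and the two displayed identities fall out together.
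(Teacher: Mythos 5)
The paper does not actually prove this lemma --- its ``proof'' is the single line ``See \cite{HJY, MAGD}'' --- so your conditioning argument is already more explicit than anything in the text. Your treatment of the first identity is complete and correct: conditioning on $T^{\alpha}_t$, using independence, and applying $E(T^{\alpha}_t)=t^{\alpha}/\Gamma(\alpha+1)$ is exactly the right computation.

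The obstacle you flag at the end, however, is not a technicality you could dispatch with the Mittag--Leffler density; it is fatal to the second identity as stated. The fractional moment of the inverse $\alpha$-stable subordinator is the analytic continuation of the integer formula recalled just before the lemma, namely $E\bigl[(T^{\alpha}_t)^{p}\bigr]=\frac{\Gamma(p+1)}{\Gamma(p\alpha+1)}\,t^{p\alpha}$ for $p>0$, so your conditioning step gives
$E\bigl[(B^{H}_{T^{\alpha}_t})^{2}\bigr]=E\bigl[(T^{\alpha}_t)^{2H}\bigr]=\frac{\Gamma(2H+1)}{\Gamma(2H\alpha+1)}\,t^{2\alpha H}$,
whereas the lemma asserts the value $\bigl(E(T^{\alpha}_t)\bigr)^{2H}=t^{2\alpha H}/\Gamma(\alpha+1)^{2H}$. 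These constants agree only when $2H=1$ (consistent with your first identity); for $2H\neq 1$ Jensen's inequality applied to the strictly convex or concave map $x\mapsto x^{2H}$ forces $E[(T^{\alpha}_t)^{2H}]\neq (E\,T^{\alpha}_t)^{2H}$, since $T^{\alpha}_t$ is non-degenerate. A numerical check with $\alpha=\tfrac12$, $H=\tfrac14$ gives $\Gamma(3/2)/\Gamma(5/4)\approx 0.978$ versus $\Gamma(3/2)^{-1/2}\approx 1.062$. So the constant you hoped to ``match with $\Gamma(\alpha+1)^{-2H}$'' cannot be matched: the power $t^{2\alpha H}$ in the lemma is correct (and this is all that the long-range-dependence asymptotics in Theorem 2.7 really use), but the prefactor should be $\Gamma(2H+1)/\Gamma(2H\alpha+1)$, not $\Gamma(\alpha+1)^{-2H}$. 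Your instinct that this step was the genuine difficulty was right; the resolution is that the displayed formula, copied from the cited sources, is itself incorrect for $H\neq\tfrac12$.
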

	\begin{proof} See  \cite{HJY, MAGD}.
	\end{proof}
\begin{Definition}
	Let $ N^{H_1H_2 }(a, b)=\{N^{{H_1H_2 }}_{t}(a,b), \; t\geq 0\}$  be a fmfBm and let  $T^{\alpha}$ be an inverse $\alpha$-stable subordinator with index $\alpha\in(0, 1)$. The subordinated of $N^{H_1H_2 }(a, b)$ by  means of  $T^{\alpha}$ is the process  $L_{T^{\alpha}}^{{H_1H_2 }}(a, b)=\{	L^{H_1H_2 }_{T^{\alpha}_t}, \; t\geq 0\}$   defined by:
	\begin{eqnarray}
		L^{H_1H_2 }_{T^{\alpha}_t}=N^{{H_1H_2 }}_{T^{\alpha}_{t}}(a,b)=aB^{{H_1 }}_{T^{\alpha}_{t}}+bB^{H_2 }_{T^{\alpha}_{t}}, \quad (a,b) \in R\times R\backslash\{0\},
	\end{eqnarray}
	where the subordinator $T^{\alpha}_t$ is assumed to be independent of both the Bm and the fBm.
\end{Definition}
\begin{Remark} \label{rem1}
When  $\alpha \uparrow 1,$ the processes $B_{T^{\alpha}_{t}}$ and $B^H_{T^{\alpha}_{t}}$ degenerate to  $B_{t}$ and $B^H_{t}.$
\end{Remark}

	\begin{Notation}
Let $U$ and $V$ be two centered random variables defined on the same probability space.  Let
	\begin{eqnarray}\label{qq12}
Corr(U, V)=\frac{Cov(U,V)}{\sqrt{E(U^{2})E(V^{2})}},
\end{eqnarray}	
denote the correlation coefficient between $U$ and $V.$
	\end{Notation}

Now we discuss the long range dependent behavior of    $L_{T^{\alpha}}^{H_1H_2}(a, b)$.
\begin{Definition}
	A finite variance stationary process $\{X_t,\;t\geq 0\}$ is said to have long range dependence property \cite{Cont}, if $\sum_{k=0}^{\infty}\gamma_k=\infty$, where
	\begin{eqnarray*}
		\gamma_k=Cov(X_k,X_{k+1}).
	\end{eqnarray*}
\end{Definition}

	In the following definition we give the equivalent definition for a non-stationary process $\{X_t,\;t\geq 0\}$.
\begin{Definition}\label{d1}
	Let $s>0$ be fixed and $t>s$. Then process $\{X_t,\;t\geq 0\}$ is said to have long range dependence property property if
	\begin{eqnarray*}
		Corr(X_t,X_s)\sim c(s)t^{-d}, \ \ as \ \ t\rightarrow\infty,
	\end{eqnarray*}	
	where $c(s)$ is a constant depending on $s$ and $d\in(0,1)$.
\end{Definition}
The main result can be stated as follows.
\begin{Theorem} \label{th1} Let $ N^{H_{1}H_{2}}(a, b)=\{N^{H_{1}H_{2}}_{t}(a,b), \; t\geq 0\}$  be the mixed fractional Brownian motion of parameters $a,  b$ and $H.$ Let   $T^{\alpha}=\{T^{\alpha}_{t},\; t\geq 0\}$ be an  inverse $\alpha$-stable subordinator with index $\alpha\in(0, 1)$ assumed to be  independent of both the Bm and the  fBm.
Then  the time-changed mixed fractional Brownian motion  by means of   $T^{\alpha}$   has long range dependence  property for every $H_{1}<H_{2}$ and $0<2\alpha H_{1}-\alpha H_{2}<1$.
	\end{Theorem}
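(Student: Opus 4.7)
The plan is to evaluate the numerator and denominator of $Corr(L^{H_1H_2}_{T^{\alpha}_t}(a,b), L^{H_1H_2}_{T^{\alpha}_s}(a,b))$ separately and then read off the leading rate of decay as $t\to\infty$ with $s$ fixed. First, the mutual independence of $B^{H_1}$, $B^{H_2}$ and $T^{\alpha}$ kills the cross terms, so
$$Cov(L^{H_1H_2}_{T^{\alpha}_t}, L^{H_1H_2}_{T^{\alpha}_s}) = a^2\, Cov(B^{H_1}_{T^{\alpha}_t}, B^{H_1}_{T^{\alpha}_s}) + b^2\, Cov(B^{H_2}_{T^{\alpha}_t}, B^{H_2}_{T^{\alpha}_s}).$$
Conditioning on $T^{\alpha}$, applying the standard fBm covariance $E[B^{H_i}_u B^{H_i}_v] = \tfrac{1}{2}(u^{2H_i}+v^{2H_i}-|u-v|^{2H_i})$, and using $T^{\alpha}_t \geq T^{\alpha}_s$ a.s.\ (non-decreasing paths), each summand simplifies to
$$Cov(B^{H_i}_{T^{\alpha}_t}, B^{H_i}_{T^{\alpha}_s}) = \tfrac{1}{2}\, E\!\left[(T^{\alpha}_t)^{2H_i} + (T^{\alpha}_s)^{2H_i} - (T^{\alpha}_t - T^{\alpha}_s)^{2H_i}\right].$$
For the denominator, Lemma \ref{lma2} combined with the same independence yields $E(L^{H_1H_2}_{T^{\alpha}_t})^2 = a^2 (t^{\alpha}/\Gamma(\alpha+1))^{2H_1} + b^2 (t^{\alpha}/\Gamma(\alpha+1))^{2H_2}$, so since $H_1 < H_2$ the $H_2$-term dominates and $\sqrt{E(L^{H_1H_2}_{T^{\alpha}_t})^2} \sim |b|\, t^{\alpha H_2}/\Gamma(\alpha+1)^{H_2}$, while $\sqrt{E(L^{H_1H_2}_{T^{\alpha}_s})^2}$ contributes only an $s$-dependent constant.

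Next I would run the asymptotic analysis. A Taylor expansion $(T^{\alpha}_t - T^{\alpha}_s)^{2H_i} = (T^{\alpha}_t)^{2H_i}(1 - T^{\alpha}_s/T^{\alpha}_t)^{2H_i}$ for large $t$ produces the leading-order approximation
$$(T^{\alpha}_t)^{2H_i} - (T^{\alpha}_t - T^{\alpha}_s)^{2H_i} \sim 2H_i\, T^{\alpha}_s\, (T^{\alpha}_t)^{2H_i - 1},$$
so the $i$-th covariance splits into a cross moment $H_i\, E[T^{\alpha}_s (T^{\alpha}_t)^{2H_i-1}]$, which by the $\alpha$-self-similarity of $T^{\alpha}$ scales like $t^{\alpha(2H_i - 1)}$ times a constant in $s$, plus the bounded term $\tfrac{1}{2}E(T^{\alpha}_s)^{2H_i}$. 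Dividing by the denominator $\sim t^{\alpha H_2}$ and keeping the slowest-decaying contribution gives $Corr(L^{H_1H_2}_{T^{\alpha}_t}, L^{H_1H_2}_{T^{\alpha}_s}) \sim c(s)\, t^{-d}$; the hypothesis $H_1 < H_2$ fixes the dominant term in the denominator, while $0 < 2\alpha H_1 - \alpha H_2 < 1$ is exactly the condition that forces $d \in (0,1)$, which is the long range dependence property in the sense of Definition \ref{d1}. The pure fmfBm corollary follows by letting $\alpha \uparrow 1$, as noted in Remark \ref{rem1}.

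The principal obstacle is rigorously handling the joint moment $E[T^{\alpha}_s (T^{\alpha}_t)^{2H_i - 1}]$: since $T^{\alpha}$ does not have independent increments, this expectation does not factor, and a separate argument is needed. I would address it using the explicit joint density (or Laplace transform) of $(T^{\alpha}_s, T^{\alpha}_t)$ available in the inverse-stable-subordinator literature, together with the self-similarity $T^{\alpha}_t \stackrel{d}{=} t^{\alpha} T^{\alpha}_1$, which reduces the leading-order computation to a one-parameter integral in $s/t$. A secondary technical point is controlling the Taylor remainder uniformly, which I would handle by dominated convergence using the moment bounds $E(T^{\alpha}_t)^n = t^{n\alpha} n!/\Gamma(n\alpha+1)$ recalled just before Lemma \ref{lma2}.
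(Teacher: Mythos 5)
Your overall architecture coincides with the paper's: independence splits the covariance into an $a^2B^{H_1}$-part and a $b^2B^{H_2}$-part, Lemma \ref{lma2} gives $E(L^{H_1H_2}_{T^{\alpha}_t})^2$ so that the $H_2$-term dominates the denominator when $H_1<H_2$, and the conclusion is read off from the exponents of the resulting power laws. The decisive divergence is in how the covariance is computed. The paper writes $E(XY)=\tfrac12\left[EX^2+EY^2-E(X-Y)^2\right]$, replaces the increment $L^{H_1H_2}_{T^{\alpha}_t}-L^{H_1H_2}_{T^{\alpha}_s}$ in distribution by $L^{H_1H_2}_{T^{\alpha}_{t-s}}$, and then evaluates everything through the marginal second moments of Lemma \ref{lma2}; this yields the closed formula \eqref{q1}, after which Steps 2 and 3 are purely deterministic Taylor expansions. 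You instead condition on $T^{\alpha}$, apply the fBm covariance, and keep $E[(T^{\alpha}_t-T^{\alpha}_s)^{2H_i}]$, which leaves you with the joint moment $E[T^{\alpha}_s(T^{\alpha}_t)^{2H_i-1}]$.

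That joint moment is where your proposal has a genuine gap. You assert that it scales like $c(s)\,t^{\alpha(2H_i-1)}$, but self-similarity only controls the marginal law of $T^{\alpha}_t$, not a joint moment of $(T^{\alpha}_s,T^{\alpha}_t)$; the asymptotic factorization you need is precisely the hard step, and you defer it to the literature rather than prove it. Moreover, even granting your asserted scaling, your covariance would be of order $t^{2\alpha H_i-\alpha}$ while the paper's route gives $t^{2\alpha H_i-1}$; since $\alpha<1$ these are different powers, so your leading correlation decay would be $t^{-\alpha(1-H_2)}$ rather than the paper's $t^{-(1-\alpha H_2)}$, and the advertised asymptotics \eqref{q33} and \eqref{qq13} would not be recovered. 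The discrepancy traces exactly to whether $T^{\alpha}_t-T^{\alpha}_s$ may be replaced by $T^{\alpha}_{t-s}$: the paper does this, relying on the stationary-increments claim made in its Definition 2.1 for the inverse subordinator, whereas your conditioning route does not. So you must either adopt that substitution and close the computation entirely with Lemma \ref{lma2} (which is the paper's proof), or stay on your route and actually establish the asymptotics of $E[T^{\alpha}_s(T^{\alpha}_t)^{2H_i-1}]$ --- accepting that the exponents you obtain will then differ from those stated in the paper.
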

	
	\begin{proof}
 Let   $T^{\alpha}=\{T^{\alpha}_{t},\; t\geq 0\}$ be an  inverse $\alpha$-stable subordinator with index $\alpha\in(0, 1)$ assumed to be  independent of both the Bm and the  fBm.  Let  $L_{T^{\alpha}}^{{H_{1}H_{2}}}(a, b)$ be the time-changed mixed fractional Brownian motion  by  means of the inverse $\alpha$-stable subordinator  $T^{\alpha}$  with index $\alpha\in(0, 1).$
The process $L_{T^{\alpha}}^{{H_{1}H_{2}}}(a, b)$ is not stationary hence  Definition \ref{d1}  will be used to establish the long range dependence property.\\

\textbf{Step 1:}  Let $s\leq t$. Since $B^{H_1}$ and $B^{H_2}$ has stationary increments, then we have
		\begin{eqnarray*}
Cov(	L^{H_{1}H_{2}}_{T^{\alpha}_t},	L^{H_{1}H_{2}}_{T^{\alpha}_s})
			&=&E(	L^{H_{1}H_{2}}_{T^{\alpha}_t}	L^{H_{1}H_{2}}_{T^{\alpha}_s})\nonumber
			= \frac{1}{2}E\left[(	L^{H_{1}H_{2}}_{T^{\alpha}_t})^2+(	L^{H_{1}H_{2}}_{T^{\alpha}_s})^2-(	L^{H_{1}H_{2}}_{T^{\alpha}_t}-L^{H_{1}H_{2}}_{T^{\alpha}_s})^2 \right]\nonumber\\
				 &=&\frac{1}{2}E\left[(N^{{H_{1}H_{2}}}_{T^{\alpha}_{t}}(a,b))^2+(N^{{H_{1}H_{2}}}_{T^{\alpha}_{s}}(a,b))^2-(N^{{H_{1}H_{2}}}_{T^{\alpha}_{t}}(a,b)-N^{{H_{1}H_{2}}}_{T^{\alpha}_{s}}(a,b))^2\right ]	 \nonumber\\
			&=&\frac{1}{2} E\left[ (aB^{H_{1}}_{T^{\alpha}_{t}}+bB^{H_{2}}_{T^{\alpha}_{t}}) ^2
			+( aB^{H_{1}}_{T^{\alpha}_{s}}+bB^{H_{2}}_{T^{\alpha}_{s}}) ^2\right]
		\nonumber\\
			&&-\frac{1}{2} E\left[\left( a(B^{H_{1}}_{T^{\alpha}_{t}}-B^{H_{1}}_{T^{\alpha}_{s}})+b(B^{H_{2}}_{T^{\alpha}_{t}}-B^{H_{2}}_{T^{\alpha}_{s}})\right)^2\right]\nonumber\\
			&=&\frac{1}{2}E \left[ (aB^{H_{1}}_{T^{\alpha}_{t}}+bB^{H_{2}}_{T^{\alpha}_{t}}) ^2
			+( aB^{H_{1}}_{T^{\alpha}_{s}}+bB^{H_{2}}_{T^{\alpha}_{s}}) ^2\right]\\&-&\frac{1}{2}E \left[( aB^{H_{1}}_{T^{\alpha}_{t-s}}+bB^{H_{2}}_{T^{\alpha}_{t-s}})^2\right]
			\nonumber\\
			&=&\frac{1}{2}E\left[(aB^{H_1}_{T^{\alpha}_{t}}+( bB^{H_2}_{T^{\alpha}_{t}})^2+2(aB^{H_1}_{T^{\alpha}_{t}} bB^{H_2}_{T^{\alpha}_{t}}) \right]\nonumber\\
			&&+\frac{1}{2}E\left[(aB^{H_1}_{T^{\alpha}_{s}})^2+( bB^{H_2}_{T^{\alpha}_{s}})^2+2(aB^{H_1}_{T^{\alpha}_{s}} bB^{H_2}_{T^{\alpha}_{s}}) \right]\nonumber\\
			&&-\frac{1}{2}E\left[(aB^{H_1}_{S^{\lambda,\alpha}_{t-s}})^2+( bB^{H_2}_{S^{\lambda,\alpha}_{t-s}})^2
			+2(aB^{H_1}_{T^{\alpha}_{t-s}} bB^{H_2}_{T^{\alpha}_{t-s}}) \right].\nonumber
	\end{eqnarray*}	
Since $B_{t}^{H_1}$ and $B_{t}^{H_2}$ are independent and using Lemma \ref{lma2} we get
	\begin{eqnarray*}	
E(	L^{H_1H_2}_{T^{\alpha}_t} L^{H_1H_2}_{T^{\alpha}_s})
			&=&\frac{a^2}{2}\left[E(B^{H_1}_{T^{\alpha}_{t}})^2+E(B^{H_1}_{T^{\alpha}_{s}})^2-E(B^{H_1}_{T^{\alpha}_{t-s}})^2 \right]\nonumber\\
			&&+\frac{b^2}{2}\left[E(B^{H_2}_{T^{\alpha}_{t}})^2+E(B^{H_2}_{T^{\alpha}_{s}})^2-E(B^{H_2}_{T^{\alpha}_{t-s}})^2 \right]\nonumber\\
			 &=&\frac{a^2}{2}\left[\left(\frac{t^{\alpha}}{\Gamma(\alpha+1)}\right)^{2{H_1}}+\left(\frac{s^{\alpha}}{\Gamma(\alpha+1)}\right)^{2{H_1}}-\left(\frac{(t-s)^{\alpha}}{\Gamma(\alpha+1)} \right)^{2{H_1}}\right]\nonumber\\
			 &&+\frac{b^2}{2}\left[\left(\frac{t^{\alpha}}{\Gamma(\alpha+1)}\right)^{2{H_2}}+\left(\frac{s^{\alpha}}{\Gamma(\alpha+1)}\right)^{2{H_2}}-\left(\frac{(t-s)^{\alpha}}{\Gamma(\alpha+1)} \right)^{2{H_2}}\right]\nonumber\\
			&=&\frac{a^2\left[t^{2\alpha {H_1}}+s^{2\alpha {H_1}}-(t-s)^{2\alpha {H_1}} \right]}{2[\Gamma(\alpha+1)]^{2{H_1}}}\nonumber
			+\frac{b^2\left[t^{2\alpha {H_2}}+s^{2\alpha {H_2}}-(t-s)^{2\alpha {H_2}} \right]}{2[\Gamma(\alpha+1)]^{2{H_2}}}.\nonumber
				\end{eqnarray*}	
Hence for all $s\leq t$ and $H_1<H_2$ we have
	\begin{eqnarray}
	E(	L^{H_1H_2}_{T^{\alpha}_t}	L^{H_1H_2}_{T^{\alpha}_s})=\frac{a^2\left[t^{2\alpha {H_1}}+s^{2\alpha {H_1}}-(t-s)^{2\alpha {H_1}} \right]}{2[\Gamma(\alpha+1)]^{2{H_1}}}
			+\frac{b^2\left[t^{2\alpha {H_2}}+s^{2\alpha {H_2}}-(t-s)^{2\alpha {H_2}} \right]}{2[\Gamma(\alpha+1)]^{2{H_2}}}.\label{q1}
		\end{eqnarray}
 \textbf{Step 2:} Let  $s$ be  fixed. Then by  Taylor's expansion we have for large $t$
\begin{eqnarray*}
	E(	L^{H_1H_2}_{T^{\alpha}_t} L^{H_1H_2}_{T^{\alpha}_s})&\sim& \frac{a^2}{2[\Gamma(\alpha+1)]^{2{H_1}}}t^{2\alpha {H_1}}\left[2\alpha {H_1} \frac{s}{t}+s^{2\alpha {H_1}}t^{-2\alpha {H_1}}+O(t^{-2}) \right]
			\\&&+\frac{b^2}{2[\Gamma(\alpha+1)]^{2{H_2}}}t^{2\alpha {H_2}}\left[2\alpha {H_2} \frac{s}{t}+s^{2\alpha {H_2}}t^{-2\alpha {H_2}}+O(t^{-2}) \right]\\&\sim& \frac{a^2t^{2\alpha {H_1}}}{2[\Gamma(\alpha+1)]^{2{H_1}}}\left[2\alpha {H_1} \frac{s}{t}+(\frac{s}{t})^{2\alpha {H_1}}+O(t^{-2}) \right]
			\\&&+\frac{b^2t^{2\alpha {H_2}}}{2[\Gamma(\alpha+1)]^{2{H_2}}}\left[2\alpha {H_2} \frac{s}{t}+(\frac{s}{t})^{2\alpha {H_2}}+O(t^{-2}) \right]
 \\&\sim& \frac{a^{2}\alpha s}{(\Gamma(\alpha+1))^{2{H_1}}}t^{2\alpha H_1-1}+\frac{b^{2}\alpha s}{(\Gamma(\alpha+1))^{2{H_2}}}t^{2\alpha H_2-1}.
		\end{eqnarray*}
Then  for fixed $s$ and large $t$, $	L^{H_1H_2}_{T^{\alpha}_t}$ satisfies
	\begin{eqnarray}
	E(	L^{H_1H_2}_{T^{\alpha}_t} L^{H_1H_2}_{T^{\alpha}_s})\sim \frac{a^{2}\alpha s}{(\Gamma(\alpha+1))^{2{H_1}}}t^{2\alpha H_1-1}+\frac{b^{2}\alpha s}{(\Gamma(\alpha+1))^{2{H_2}}}t^{2\alpha H_2-1}. \label{q33}
		\end{eqnarray}
 \textbf{Step 3:}  Let  $H_1<H_2$. Using Eqs. \eqref{qq12}, \eqref{q33} and by  Taylor's expansion we get, as $t\rightarrow\infty$
	\begin{eqnarray*}	
Corr(	L^{H_1H_2}_{T^{\alpha}_t},	L^{H_1H_2}_{T^{\alpha}_s})&\sim&  
\frac{\frac{a^{2}\alpha s}{(\Gamma(\alpha+1))^{2{H_1}}}t^{2\alpha H_1-1}+
\frac{b^{2}\alpha s}{(\Gamma(\alpha+1))^{2{H_2}}}t^{2\alpha H_2-1}}{
\left[\frac{a^{2}\alpha }{(\Gamma(\alpha+1))^{2H_1}}t^{2\alpha H_1}+\frac{b^{2}\alpha }{(\Gamma(\alpha+1))^{2H_2}}t^{2\alpha H_2}\right]^{\frac{1}{2}} \left[E(L_s^{T^{\alpha}})^{2}\right]^{\frac{1}{2}}}    \\&=&
\frac{\frac{a^{2}\alpha s}{(\Gamma(\alpha+1))^{2H_1}}t^{2\alpha H_1-1}+\frac{b^{2}\alpha s}{(\Gamma(\alpha+1))^{2H_2}}t^{2\alpha H_2-1}}{\frac{|b|\alpha^{\frac{1}{2}} t^{\alpha H_2} }{(\Gamma(\alpha+1))^{H_2}}
\left[\frac{a^{2}}{2b^{2}(\Gamma(\alpha+1))^{1-2H_2}}t^{ 2\alpha H_1-2\alpha H_2}+1\right]^{\frac{1}{2}}\left[ E(L_s^{T^{\alpha}})^{2}\right]^{\frac{1}{2}}}   \\&\sim&  \frac{a^{2}\alpha^{\frac{1}{2}} st^{2\alpha H_1- \alpha H_2-1}}{|b|(\Gamma(\alpha+1))^{2H_1-H_2}
\left[ E(L_s^{T^{\alpha}})^{2}\right]^{\frac{1}{2}}} + \frac{|b|\alpha^{\frac{1}{2}} st^{\alpha H_2-1}}{
(\Gamma(\alpha+1))^{H_2}\left[ E(L_s^{T^{\alpha}})^{2}\right]^{\frac{1}{2}}} .		
	\end{eqnarray*}
Hence, for every $H_1<H_2$ we have
\begin{eqnarray}	\label{qq13}
Corr(	L^{H_1H_2}_{T^{\alpha}_t},	L^{H_1H_2}_{T^{\alpha}_s})\sim  \frac{a^{2}\alpha^{\frac{1}{2}} s\,t^{2\alpha H_1- \alpha H_2-1}}{|b|(\Gamma(\alpha+1))^{2H_1-H_2}
\left[ E(L_s^{T^{\alpha}})^{2}\right]^{\frac{1}{2}}} + \frac{|b|\alpha^{\frac{1}{2}} s\,t^{\alpha H_2-1}}{
(\Gamma(\alpha+1))^{H_2}\left[ E(L_s^{T^{\alpha}})^{2}\right]^{\frac{1}{2}}}. 	
\end{eqnarray}	
Then the correlation function of $L^{H_1H_2}_{T^{\alpha}_t}$ decays like a mixture of power law $t^{-(2\alpha H_1- \alpha H_2-1)}+t^{-(1-\alpha H_2)}$. Since $0<2\alpha H_1- \alpha H_2<1$ then the first term tends to zero as $t\rightarrow\infty.$  Then the time-changed process $L_{T^{\alpha}}^{H}(a, b)$ exhibits  long range dependence  property for all $H_1<H_2$ and $0<2\alpha H_1- \alpha H_2<1$.
\end{proof}	
	
\begin{Remark} When $a=0$ and $b=1$ in Eqs. \eqref{q33} and \eqref{qq13} we get
		\begin{eqnarray*}
				&&E(	L^{H_1H_2}_{T^{\alpha}_t}	L^{H_1H_2}_{T^{\alpha}_s})=E(B^{H_2}_{T_{t}^{\alpha}}B^{H_2}_{T_{s}^{\alpha}})
				\sim \frac{\alpha st^{2\alpha H_2-1}}{(\Gamma(\alpha+1))^{2H_2}}, \ \ as \ \ t \rightarrow \infty,
				\\&& Corr(	L^{H_1H_2}_{T^{\alpha}_t},	L^{H_1H_2}_{T^{\alpha}_s})=Corr(B^{H_2}_{T_{t}^{\alpha}},B^{H_2}_{T_{s}^{\alpha}})
				\sim \frac{\alpha^{\frac{1}{2}}st^{\alpha H_2-1}}{(\Gamma(\alpha+1))^{H_2}\sqrt{E(B^{H_2}_{T_{s}^{\alpha}})^{2}}}, \ \ as \ t\rightarrow\infty.
			\end{eqnarray*}
\end{Remark}

Hence we obtain the following result.
\begin{Corollary}\label{cor1}
The fractional Brownian motion time changed by  inverse $\alpha$-stable subordinator with index $\alpha\in(0, 1)$ is of long range dependence   for the Hurst exponent  $H\in(0,1)$.
\end{Corollary}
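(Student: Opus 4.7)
The plan is to deduce the corollary as the special case $a = 0$, $b = 1$, $H_2 = H$ of Theorem~\ref{th1}. With these choices, the fmfBm $N^{H_1 H_2}_t(0,1) = B^{H_2}_t$ degenerates to a fractional Brownian motion of Hurst parameter $H$, and the subordinated process $L^{H_1 H_2}_{T^{\alpha}_t}(0,1)$ coincides with $B^H_{T^{\alpha}_t}$. Since this process is not stationary, I will apply Definition~\ref{d1}.

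First I would specialize the covariance computation of Step~1 in the proof of Theorem~\ref{th1}. Setting $a = 0$ eliminates the $H_1$-block and leaves
\begin{equation*}
E(B^H_{T^{\alpha}_t} B^H_{T^{\alpha}_s}) = \frac{t^{2\alpha H} + s^{2\alpha H} - (t-s)^{2\alpha H}}{2\,[\Gamma(\alpha+1)]^{2H}},
\end{equation*}
which is precisely what the Remark preceding the corollary records (with $H_2 = H$). Then the Taylor expansion of Step~2, for fixed $s$ and $t \to \infty$, yields
\begin{equation*}
E(B^H_{T^{\alpha}_t} B^H_{T^{\alpha}_s}) \sim \frac{\alpha\, s}{[\Gamma(\alpha+1)]^{2H}}\, t^{2\alpha H - 1}.
\end{equation*}

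Next I would normalize to obtain the correlation. Using Lemma~\ref{lma2} to write the denominator $\sqrt{E(B^H_{T^{\alpha}_t})^2 \, E(B^H_{T^{\alpha}_s})^2}$ explicitly, the $t^{\alpha H}$ factor in the normalization absorbs half of the $t^{2\alpha H - 1}$ growth in the numerator, giving
\begin{equation*}
Corr(B^H_{T^{\alpha}_t}, B^H_{T^{\alpha}_s}) \sim c(s)\, t^{-(1 - \alpha H)}, \qquad t \to \infty,
\end{equation*}
for an explicit constant $c(s) > 0$ depending only on $s$, $\alpha$, $H$.

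Finally I would check that the decay exponent $d := 1 - \alpha H$ lies in $(0,1)$. Since $\alpha \in (0,1)$ and $H \in (0,1)$, one has $\alpha H \in (0,1)$ and hence $d \in (0,1)$, so Definition~\ref{d1} gives long-range dependence for every $H \in (0,1)$. I do not anticipate any real obstacle here: the restrictive condition $0 < 2\alpha H_1 - \alpha H_2 < 1$ of Theorem~\ref{th1} is irrelevant in this setting because with $a = 0$ the $t^{2\alpha H_1 - \alpha H_2 - 1}$ contribution to the general correlation simply vanishes, leaving the single clean power law $t^{-(1 - \alpha H)}$.
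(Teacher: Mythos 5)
Your proposal is correct and follows exactly the paper's route: the paper also obtains the corollary by setting $a=0$, $b=1$ in the asymptotics \eqref{q33} and \eqref{qq13} from the proof of Theorem \ref{th1} (recorded in the Remark preceding the corollary), arriving at $Corr(B^{H}_{T^{\alpha}_t},B^{H}_{T^{\alpha}_s})\sim c(s)\,t^{-(1-\alpha H)}$ with $1-\alpha H\in(0,1)$. Your added observation that the constraint $0<2\alpha H_1-\alpha H_2<1$ becomes vacuous when $a=0$ is accurate and consistent with the paper.
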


Similar result as Corollary  \ref{cor1} was obtained in \cite{KGW} (\cite{KWPS}) in the case of fractional Brownian motion time changed by tempered stable subordinator (gamma subordinator).\\

As application to the original process  we obtain the following. .

\begin{Corollary}\label{cor2}
Let $H>\frac{1}{2}.$ When  $\alpha \uparrow 1,$  in Eqs. \eqref{q33} and \eqref{qq13}  we have
		\begin{eqnarray*}
		&&\lim_{\alpha \rightarrow 1}E(	L^{H_1H_2}_{T^{\alpha}_t}	L^{H_1H_2}_{T^{\alpha}_s})=\frac{a^{2}s}{2}+b^{2}st^{2H-1},    \ \ as \ \  t\rightarrow\infty,
\\&&\lim_{\alpha \rightarrow 1}	Corr(	L^H_{T^{\alpha}_t},	L^{H_1H_2}_{T^{\alpha}_s})=\frac{a^{2} s\,t^{- H}}{2|b|
\sqrt{ E(N_s^H(a,b))^{2}}} + \frac{|b| s\,t^{ H-1}}{
\sqrt{ E(N_s^H(a,b))^{2}}},\ \ as \ \ t\rightarrow\infty.
	\end{eqnarray*}	
\end{Corollary}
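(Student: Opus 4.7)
The proof of Corollary \ref{cor2} is a direct limit computation: the heavy asymptotic work has already been absorbed into formulas \eqref{q33} and \eqref{qq13}, so nothing new needs to be estimated. My plan is to let $\alpha \uparrow 1$ in those two expressions term by term, relying on two simple observations. First, $\Gamma(\alpha+1) \to \Gamma(2) = 1$, so every factor of $(\Gamma(\alpha+1))^{c}$ appearing in \eqref{q33} and \eqref{qq13} disappears in the limit. Second, by Remark \ref{rem1}, the time-changed building blocks $B_{T^{\alpha}_t}$ and $B^{H_i}_{T^{\alpha}_t}$ degenerate to $B_t$ and $B^{H_i}_t$, so $L^{H_1 H_2}_{T^{\alpha}_t}$ degenerates to the parent fmfBm $N^{H_1 H_2}_t(a,b)$.

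For the first identity, I would take the right-hand side of \eqref{q33}, read off the case $H_1 = 1/2$ and $H_2 = H > 1/2$ that is signalled by the single Hurst index $H$ in $N^H_s(a,b)$ in the statement (that is, the fmfBm collapsing to a mfBm), and then let $\alpha \uparrow 1$. The factor $\alpha$ in the numerators tends to $1$, the two $\Gamma(\alpha+1)$ factors in the denominators tend to $1$, and the exponents $2\alpha H_1 - 1$ and $2\alpha H_2 - 1$ converge respectively to $0$ and $2H-1$; collecting these contributions yields the claimed $\tfrac{a^{2}s}{2}+b^{2}s\,t^{2H-1}$.

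For the second identity I would perform the same substitution in \eqref{qq13}. The only new ingredient is that the denominator $[E(L_s^{T^{\alpha}})^{2}]^{1/2}$ converges to $[E(N_s^H(a,b))^{2}]^{1/2}$; this follows either directly from Remark \ref{rem1}, or from the explicit second-moment formula underlying Step~1 of Theorem \ref{th1}. Together with $\alpha^{1/2}\to 1$ and the Gamma-function powers tending to $1$, all the coefficient factors simplify and we recover the stated two-term asymptotic, with the $t^{2\alpha H_1-\alpha H_2-1}$ exponent degenerating to $t^{-H}$ and $t^{\alpha H_2-1}$ degenerating to $t^{H-1}$.

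There is no real obstacle: the argument is a continuity-in-$\alpha$ check on explicit formulas. The only point deserving a sentence of care is the interchange of $\lim_{\alpha \to 1}$ with the $t \to \infty$ asymptotic used to derive \eqref{q33} and \eqref{qq13}; this is legitimate because the coefficients involved are continuous functions of $\alpha$ on a neighbourhood of $1$, and the exponents themselves depend continuously on $\alpha$, so the dominant power-law behaviour is preserved under the limit.
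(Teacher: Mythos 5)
Your approach is exactly the one the paper intends: Corollary \ref{cor2} is given without any written proof, being treated as an immediate substitution of $\alpha=1$ (together with the identification $H_1=\tfrac12$, $H_2=H$, under which the fmfBm collapses to the mfBm) into Eqs.~\eqref{q33} and \eqref{qq13}, using $\Gamma(2)=1$ and Remark \ref{rem1}; your reading of the collapsed notation and your remark on interchanging the limits are both sensible. The one point where your write-up overstates the case is the claim that collecting the contributions ``yields the claimed'' constants. Substituting $H_1=\tfrac12$, $H_2=H$, $\alpha=1$ into \eqref{q33} as printed gives $a^{2}s\,t^{0}+b^{2}s\,t^{2H-1}=a^{2}s+b^{2}s\,t^{2H-1}$, not $\tfrac{a^{2}s}{2}+b^{2}s\,t^{2H-1}$, and likewise the first term of \eqref{qq13} becomes $\frac{a^{2}s\,t^{-H}}{|b|\sqrt{E(N^{H}_{s}(a,b))^{2}}}$ without the factor $\tfrac12$. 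The discrepancy originates in the paper itself: the correct Taylor coefficient in Step~2 is $a^{2}\alpha H_{1}s$ rather than $a^{2}\alpha s$, and moreover at the borderline value $2\alpha H_{1}=1$ the discarded term $s^{2\alpha H_1}$ is of the same order $t^{0}$ as the retained one, so the exact Brownian covariance $a^{2}s$ (consistent with $E(B_tB_s)=s$ for $s\le t$) is what actually comes out. So your method is sound and coincides with the paper's, but a careful proof should either correct the constants in the corollary or note explicitly that the stated ones do not follow from \eqref{q33} and \eqref{qq13} as displayed, rather than asserting that the substitution reproduces them.
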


Hence using Remark \ref{rem1} and corollary \ref{cor2} we can see that the mixed fractional Brownian motion of parameters $a,  b $ and $  H$ has long range dependence  property  for all $H>\frac{1}{2}$ in  sense of Definition \ref{d1}.

\begin{Remark}
\begin{enumerate}
\item Let $H\in(0, 1)$. Then  \begin{eqnarray}	\label{qq111}
Corr(B^{H}_{t}, B^{H}_{s})\sim   \frac{ st^{ H-1}}{
\sqrt{E(B^{H}_{s})^{2}} },\ \ as \ \ t\rightarrow\infty.   	
	\end{eqnarray}
Indeed, we take  $a=0 $ and $b=1$ in Eq. \eqref{qq13}. When  $\alpha \uparrow 1$ and using Remark \ref{rem1} we obtain Eq. \eqref{qq111}.

\item When  $\alpha \uparrow 1,$  in Eq. \eqref{q1} we have
		\begin{eqnarray*}
		\lim_{\alpha \rightarrow 1}	E(	L^{H_1H_2}_{T^{\alpha}_t}	L^{H_1H_2}_{T^{\alpha}_s})=\frac{a^2}{2}\left[t^{2H_1}+s^{2 H_1}-(t-s)^{2 H_1} \right]
			+\frac{b^2}{2}\left[t^{2H_2}+s^{2 H_2}-(t-s)^{2 H_2} \right].
	\end{eqnarray*}	
 \end{enumerate}
\end{Remark}
\begin{Corollary}
The fractional mixed fractional Brownian motion  has long range dependence for  every $H_1 < H_2.$ 
\end{Corollary}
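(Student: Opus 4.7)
The plan is to obtain this corollary as a limiting case of Theorem \ref{th1}, letting $\alpha \uparrow 1$ so that by Remark \ref{rem1} the time‑changed process $L^{H_1H_2}_{T^{\alpha}_t}(a,b)$ degenerates to the parent fmfBm $N^{H_1H_2}_t(a,b)=aB^{H_1}_t+bB^{H_2}_t$. Substituting $\alpha=1$ into the asymptotic \eqref{qq13} produced in Step 3 of the theorem gives, for fixed $s$ and $t\to\infty$,
\begin{equation*}
Corr\bigl(N^{H_1H_2}_t(a,b),N^{H_1H_2}_s(a,b)\bigr)
\sim \frac{a^{2}\,s\,t^{2H_1-H_2-1}}{|b|\sqrt{E(N^{H_1H_2}_s(a,b))^{2}}}
+ \frac{|b|\,s\,t^{H_2-1}}{\sqrt{E(N^{H_1H_2}_s(a,b))^{2}}},
\end{equation*}
which matches the formula already recorded in Corollary \ref{cor2}. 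Thus no new covariance computation is needed; I simply reuse the Taylor expansion that was carried out in the theorem, specialised to $\alpha=1$.

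Next I would identify the dominant power. Because $H_1<H_2$, the exponents satisfy $2H_1-H_2-1<H_2-1$, so the first summand decays strictly faster than the second and may be absorbed into the remainder. Therefore
\begin{equation*}
Corr\bigl(N^{H_1H_2}_t(a,b),N^{H_1H_2}_s(a,b)\bigr)\sim c(s)\,t^{-(1-H_2)},\qquad t\to\infty,
\end{equation*}
with $c(s)=|b|\,s/\sqrt{E(N^{H_1H_2}_s(a,b))^{2}}$. I would then check that the decay exponent $d:=1-H_2$ lies in $(0,1)$, which is automatic from $H_2\in(0,1)$. By Definition \ref{d1} the fmfBm therefore has the long range dependence property for every $H_1<H_2$.

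A small consistency check I would perform is that the additional hypothesis $0<2\alpha H_1-\alpha H_2<1$ of Theorem \ref{th1} does not become vacuous in the limit: indeed, with $\alpha=1$ and $H_1<H_2<1$ one has $2H_1-H_2<H_1<1$ and $2H_1-H_2$ is finite, so the faster‑decaying contribution in \eqref{qq13} still behaves as a genuine (vanishing) power. The only mildly delicate point in the argument is this dominance comparison between the two power laws; it is essentially arithmetic, so I do not expect any substantial obstacle beyond writing the display lines cleanly and invoking Definition \ref{d1}.
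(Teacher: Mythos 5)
Your proposal is correct and follows essentially the same route the paper intends: the corollary is left implicit there, but it is meant to be read off from Eq.~\eqref{qq13} by letting $\alpha \uparrow 1$, invoking Remark~\ref{rem1} to identify the limit with the parent fmfBm, noting that for $H_1<H_2$ the term $t^{H_2-1}$ dominates, and applying Definition~\ref{d1} with $d=1-H_2\in(0,1)$. Your explicit dominance comparison and the observation that the theorem's extra hypothesis $0<2\alpha H_1-\alpha H_2<1$ is not needed for the limiting statement are welcome clarifications of what the paper leaves unsaid.
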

The  idea, used  results for the time-changed process to obtain  a results for the original one is already investigated in \cite{HmMl}.\\

The fmfBm has been further generalized by Th$\ddot{a}$le in 2009 \cite{{Thale}} to the generalized mixed fractional Brownian motion. A generalized mixed fractional Brownian motion of parameter $H=(H_1, H_2, ..., H_n)$ and $\alpha=(\alpha_1, \alpha_2, ..., \alpha_n)$ is a stochastic process $Z=(Z_t^{H, \alpha})_{t\geq 0}$ defined by
 \begin{eqnarray*}
Z_t^{H, \alpha}=\alpha_1 B_{t}^{H_{1}}+\alpha_2B_{t}^{H_{2}}+...+\alpha_nB_{t}^{H_{n}}
 \end{eqnarray*}
Forthcoming work, we will investigate the long range dependence property of the time-changed generalized mixed fractional Brownian motion by inverse $\alpha$-stable subordinator \cite{MLKE}.

\end{document}